\documentclass[a4paper,12pt,reqno]{amsart} % reqno = right equation numbers

\makeatletter
\tagsleft@false % make sure tags (equation numbers) are on the right
\makeatother

\usepackage{comment}

\usepackage{tikz}
\usepackage{pgfplots}
\pgfplotsset{compat=1.18}
\usepackage{longtable}
\usepackage{amsmath}
\usepackage{amssymb}
\usepackage{amsthm}
\usepackage{mathrsfs}
\usepackage{ifthen}
\usepackage{graphicx}
\usepackage[T1]{fontenc}

\numberwithin{equation}{section}

\newtheorem*{theorem*}{Theorem}

\newtheorem{thm}{Theorem}[section]
\newtheorem{lem}{Lemma}[section]

\newtheorem*{cor*}{Corollary}

\theoremstyle{definition}

\newtheorem{prob}{Problem}[section]

\newtheorem{cor}{Corollary}[section]

\newcounter{minutes}
\divide\time by 60
\newcounter{hours}
\multiply\time by 60
\addtocounter{minutes}{-\time}

\newcounter{own}
\def\theown{\thesection.\arabic{own}}

\newenvironment{pf}[1][]{%
	\vskip 3mm
	\noindent
	\ifthenelse{\equal{#1}{}}%
	{{\slshape Proof. }}%
	{{\slshape #1.} }%
}%
{\qed\bigskip}

\newcounter{alphabet}

\def\be{\begin{equation}}
	\def\ee{\end{equation}}

\newcommand{\bee}{\begin{enumerate}}
	\newcommand{\eee}{\end{enumerate}}

\newcommand{\blem}{\begin{lem}}
	\newcommand{\elem}{\end{lem}}
\newcommand{\bthm}{\begin{thm}}
	\newcommand{\ethm}{\end{thm}}
\newcommand{\bcor}{\begin{cor}}
	\newcommand{\ecor}{\end{cor}}
\newcommand{\eeg}{\end{examp}}

\newcommand{\eegs}{\end{examples}}
\newcommand{\edefe}{\end{defin}}
\newcommand{\bprob}{\begin{prob}}
\newcommand{\eprob}{\end{prob}}
\newcommand{\bei}{\begin{itemize}}
\newcommand{\eei}{\end{itemize}}

\usepackage{xcolor}

\begin{document}

\title{{The second and third Hankel determinants for certain classes of functions}}

\author{Vasudevarao Allu}
\address{Vasudevarao Allu,
Department of Mathematics,
School of Basic Sciences,
Indian Institute of Technology Bhubaneswar,
Bhubaneswar-752050, Odisha, India.}
\email{avrao@iitbbs.ac.in}

\author{Shobhit Kumar}
\address{Shobhit Kumar,
Department of Mathematics,
School of Basic Sciences,
Indian Institute of Technology Bhubaneswar,
Bhubaneswar-752050, Odisha, India.}
\email{a21ma09007@iitbbs.ac.in}

\subjclass{30C45, 30C50.}

\keywords{Analytic functions, Hankel determinants, Convex function, Subordination }

\def\thefootnote{}
\footnotetext{
{ }
}
\makeatletter\def\thefootnote{\@arabic\c@footnote}\makeatother
\begin{abstract}
Let $\mathcal{A}$ denote the class of analytic functions such that $f(0)=0$ and $f'(0)=1$ in the unit disk $\mathbb{D}:=\{z \in \mathbb{C}: |z|<1\}.$
In the present paper, we consider  $\mathcal{C}(\varphi) := \left\{ f \in \mathcal{A} : 1+zf''(z)/f'(z) \prec \varphi(z):=(1+z/2)^2 \right\}$, as subclass of convex functions and  compute the sharp second and third Hankel determinants for functions in $\mathcal{C}(\varphi)$. 
\end{abstract}
\maketitle
\markboth{V. Allu and S. Kumar}{Convex Ma--Minda subclass of $\varphi(z)$}
\setcounter{page}{1}
\section{Introduction}
Let $\mathcal{A}$ denote the class of analytic functions $f$ in the unit disk
$
\mathbb{D}:=\{z\in\mathbb{C}:|z|<1\}
$
normalized by
$
f(0)=0$ and  $f'(0)=1$.
Thus each function $f\in\mathcal{A}$ has a power series representation such that
\[
f(z)=z+\sum_{n=2}^{\infty}a_n z^n,
\qquad z\in\mathbb{D}.
\]
Let $\mathcal{S}$ denote the subclass of $\mathcal{A}$ consisting of univalent functions.
A function $f\in \mathcal{A}$ is said to be starlike if $f(\mathbb{D})$ is a starlike domain. We denote by $\mathcal{S}^*$ the class of starlike functions $f$ in $\mathcal{S}$. It is well known that a function $f\in \mathcal{A}$ belongs to $\mathcal{S}^*$ if, and only if,
\[
\operatorname{Re}\!\left(\frac{z f'(z)}{f(z)}\right)>0,
\qquad z\in\mathbb{D}.
\]
Further, a domain $\Omega\subseteq \mathbb{C}$ is called convex if the line segment joining any two points of $\Omega$ lies entirely in $\Omega$. A function $f\in \mathcal{A}$ is called convex if $f(\mathbb{D})$ is a convex domain. We denote by $\mathcal{C}$ the class of convex functions in $\mathcal{S}$. A function $f\in \mathcal{A}$ belongs to $\mathcal{C}$ if, and only if,
\[
\operatorname{Re}\!\left(1+\frac{z f''(z)}{f'(z)}\right)>0,
\qquad z\in\mathbb{D}.
\]
These classical families and their geometric consequences form a central part of geometric function theory; see, for instance, Goodman \cite{Goodman1983}.\\[2mm]
In order to unify and generalize many well-known subclasses of starlike and convex functions, Ma and Minda \cite{MaMinda1992} introduced a systematic approach based on subordination. Let $\varphi$ be analytic in $\mathbb{D}$ with $\operatorname{Re}\,\varphi(z)>0$, normalized by $\varphi(0)=1$ and $\varphi'(0)>0$, and assume that $\varphi(\mathbb{D})$ is symmetric with respect to the real axis and starlike with respect to the point $1$. Then a function $f\in\mathcal{A}$ is said to belong to the Ma--Minda starlike class $\mathcal{S}^*(\varphi)$  if, and only if,
\[
\frac{z f'(z)}{f(z)} \prec \varphi(z),
\qquad z\in\mathbb{D},
\]
and to the Ma--Minda convex class $\mathcal{C}(\varphi)$ if, and only if,
\[
1+\frac{z f''(z)}{f'(z)} \prec \varphi(z),
\qquad z\in\mathbb{D}.
\]
The Ma--Minda framework contains many important subclasses as special cases and provides a flexible setting for studying coefficient problems, radius problems, and extremal questions, (see \cite{Janowski1971, MaMinda1992}).\\[2mm]
A substantial amount of work has been devoted to Ma--Minda type classes associated with specific target regions such as conic domains, vertical strip domains, cardioid-type regions, and domains generated by special functions. For example, conic regions and $k$-uniform convexity were investigated by Kanas and Wisniowska in \cite{KanasWisniowska1999}, cardioid-type starlikeness was studied by Kumar and Kamaljeet in \cite{KumarKamaljeet2021} and by Sharma, Jain and Ravichandran in \cite{SharmaJainRavichandran2016}, and several subclasses connected with vertical strip domains were examined by Sun, Wang, Rasila and Soko{\l} in \cite{SunWangRasilaSokol2019}, by Kwon and Sim in \cite{KwonSim2019}, and by Mahzoon and Soko{\l} in \cite{MahzoonSokol2023}. Further examples of Ma--Minda type starlike families associated with special mappings include several authors (see  \cite{GoelKumar2020}, \cite{KurokiOwa2012}, \cite{MendirattaNagpalRavichandran2015}, \cite{UralegaddiGanigiSarangi1994}).\\[2mm]
Along with geometric properties, coefficient functionals play an important role in the study of these classes. Among them, Hankel determinants provide refined information about the growth and interaction of the Taylor coefficients. For a function
$f\in \mathcal{S},$
the second Hankel determinant is defined by
\[
H_2(2)=a_2 a_4-a_3^2,
\]
and the third Hankel determinant is given by
\[
H_3(1)=
\begin{vmatrix}
	1 & a_2 & a_3\\
	a_2 & a_3 & a_4\\
	a_3 & a_4 & a_5
\end{vmatrix}.
\]
The systematic study of Hankel determinants for univalent functions goes back to Pommerenke \cite{Pommerenke1966}. Sharp bounds for $H_2(2)$ and $H_3(1)$ have subsequently been obtained for several subclasses. For instance, sharp estimates for both the second and third Hankel determinants were derived for the class $\mathcal{S}_L$ by Banga and Kumar \cite{BangaKumar2020}, and radius problems in $\mathcal{S}_L$ were investigated by Soko{\l} \cite{Sokol2009}. In addition, inverse coefficient problems and related coefficient estimates form a classical theme in the theory of convex and $(\alpha,\beta)$-convex functions; see Libera and Z{\l}otkiewicz \cite{LiberaZlotkiewicz1982} and Prokhorov and Szynal \cite{ProkhorovSzynal1981}. Recent contributions to Hankel-type problems for inverse and logarithmic inverse coefficients may be found in Allu and Shaji \cite{VasuAmal2025}. We also mention that sharper coefficient control inside the Carath\'eodory class has been studied, for example, via higher coefficient problems in Kwon, Lecko and Sim \cite{KwonLeckoSim2018}, which is useful in handling extremal questions involving $a_4$ and $a_5$.\\[2mm]
Motivated by these developments,in this paper  we study  the Ma--Minda convex subclass generated by
\[
\varphi(z)=1+z+\frac{z^2}{4}.
\]
For functions
\[
f(z)=z+\sum_{n=2}^{\infty}a_n z^n
\]
belonging to this class, we obtain sharp estimates for the second Hankel determinant $H_2(2)$ and the third Hankel determinant $H_3(1)$.
%%%%%%%%%%%%
%%%%%%%%%%%%%%%%
\section{Certain properties of the class $\mathcal{C}(\varphi)$}
In this section we prove that the function $\varphi(z)$
satisfy all the properties for Ma-Minda function and we also provide the integral representation for the functions in the class $\mathcal{C}(\varphi).$ It is easy to see that $\varphi(0)=1$ and $\varphi'(0)>0$, and that $\varphi$ is symmetric with respect to the real axis. Since the domain under consideration is the unit disk $\mathbb{D}$, for $z\in \overline{\mathbb{D}}$ we have
\[
\left|1+\frac{z}{2}\right|\le 1+\frac{|z|}{2}\le \frac{3}{2}
\]
and
\[
\left|1+\frac{z}{2}\right|\ge 1-\frac{|z|}{2}\ge \frac{1}{2}.
\]
Hence
\begin{align}\label{eq:2.1}
\frac{1}{4} \;\le \left|\left(1+\frac{z}{2}\right)^{2}\right| \;\le \frac{9}{4}
\qquad  \text{for all } z \in \overline{\mathbb{D}}.
\end{align}
To check the univalency of $\varphi(z)$, assume \( \varphi(z_1) = \varphi(z_2) \). Then
\[
\left(1 + \frac{z_1}{2} \right)^2 = \left(1 + \frac{z_2}{2} \right)^2.
\]
Hence
\[
1 + \frac{z_1}{2} = \pm \left(1 + \frac{z_2}{2} \right).
\]
In the first case, we have
\[
1 + \frac{z_1}{2} = 1 + \frac{z_2}{2},
\]
which implies $z_{1}=z_{2}.$
While in the second case, we have
\[
1 + \frac{z_1}{2} = -\left(1 + \frac{z_2}{2} \right),
\]
which implies
\[
1 + \frac{z_1}{2} = -1 - \frac{z_2}{2},
\qquad\text{hence}\qquad
z_1 + z_2 = -4.
\]
However, for $z_1,$ $z_2\in \mathbb{D},$ we have
\[
|z_1 + z_2| \leq |z_1| + |z_2| < 2,
\]
which contradicts \( z_1 + z_2 = -4 \). Therefore \( \varphi(z) \) is univalent in  $\mathbb{D}$.\\[2mm]
To verify the starlikeness of the function, recall that
\begin{align*}
\varphi(z) = \left(1 + \frac{z}{2} \right)^2.
\end{align*}
Then, we have
\begin{align*}
\varphi(0) = 1,
\qquad
\varphi'(z) = 1 + \frac{z}{2}.
\end{align*}
To verify that \( \varphi \) is starlike with respect to \( \varphi(0) = 1 \), we show that
\[
\operatorname{Re}\!\left( \frac{z \varphi'(z)}{\varphi(z) - 1} \right) > 0 
\quad \text{for all } z \in \mathbb{D}.
\]
Since,
\[
\varphi(z)-1 = z + \frac{z^2}{4},
\]
we obtain, 
\[
\frac{z\,\varphi'(z)}{\varphi(z)-1}
=
\frac{z(1+z/2)}{z+z^{2}/4}
=
\frac{1+z/2}{1+z/4}.
\]
Therefore, writing $z=re^{i\theta}$ with $0\le r<1$, we get
\[
\operatorname{Re}\,\left(\frac{1+z/2}{1+z/4}\right)
=
\frac{\operatorname{Re}\!\left((1+z/2)\left(1+\overline{z}/4\right)\right)}{|1+z/4|^{2}}
=
\frac{1+\frac34\operatorname{Re}z+\frac18|z|^{2}}{|1+z/4|^{2}}.
\]
Moreover, since $\operatorname{Re}z\ge -|z|$, we get
\[
1+\frac34\operatorname{Re}z+\frac18|z|^{2}
\ge
1-\frac34|z|+\frac18|z|^{2}
>
1-\frac34+\frac18
=
\frac{3}{8}
>0,
\]
and therefore
\[
\operatorname{Re}\!\left(\frac{z\,\varphi'(z)}{\varphi(z)-1}\right)
=
\operatorname{Re}\!\left(\frac{1+z/2}{1+z/4}\right) > 0
\quad \text{for all } z \in \mathbb{D}.
\]
This shows that the function is starlike with respect to the point $\varphi(0)=1$.\\[2mm]
Now we verify that $\operatorname{Re}\,\varphi(z)>0$ in the $\mathbb{D}$.
Let \(z=x+iy\) with \(x^2+y^2<1\).
Set
\[
w=1+\frac z2 = a+ib,
\qquad
a=1+\frac x2,
\quad
b=\frac y2.
\]
Then, we have 
\[
\varphi(z)=w^2=(a+ib)^2=(a^2-b^2)+i(2ab),
\]
so
\[
\operatorname{Re}\,\varphi(z)=a^2-b^2
=\left(1+\frac x2\right)^2-\left(\frac y2\right)^2.
\]
Since \(|z|<1\), we have
$
x^2+y^2<1,$
hence
$
y^2<1-x^2.
$
Therefore, we have 
\[
\operatorname{Re}\,\varphi(z)
>
\left(1+\frac x2\right)^2-\frac{1-x^2}{4}, \quad\text{for} \quad z\in \mathbb{D}.
\]
A simple computation shows that 
\[
\left(1+\frac x2\right)^2=1+x+\frac{x^2}{4},
\]
so
\[
\left(1+\frac x2\right)^2-\frac{1-x^2}{4}
=\frac34+x+\frac{x^2}{2}.
\]
Completing the square gives
\[
\frac{x^2}{2}+x+\frac34
=\frac12(x+1)^2+\frac14
\ge \frac14.
\] 
Consequently, we have 
\[
\operatorname{Re}\,\varphi(z)>\frac14 
\qquad \text{for } z\in\mathbb D.
\]
Therefore, $\varphi(z)$ satisfies all the conditions of a Ma--Minda function.\\[2mm]
We denote by $\mathcal{C}(\varphi)$ the class of Ma--Minda convex functions subordinate to $\varphi(z).$ \textit{i.e.},
\[
\mathcal{C}(\varphi)
:= \left\{\, f \in \mathcal{A} : 1+z\frac{ f''(z)}{f'(z)} \prec \varphi(z) \,\right\}.
\]
%%%%%%%%%%%%%%%%%%%%%
%	\section{Integral representation of the class}
Now we provide the integral representation of the functions in the class $\mathcal{C}(\varphi).$
Let \(f\in \mathcal{C}(\varphi)\), then
\begin{align}\label{eq:2.2}
1+\frac{z f''(z)}{f'(z)} \prec \varphi(z).
\end{align}
By subordination, there exists a Schwarz function \(w:\mathbb{D}\to\mathbb{D}\),
analytic with \(w(0)=0\) and \(|w(z)|<1\), such that
\begin{align}\label{eq:2.3}
1+\frac{z f''(z)}{f'(z)} &= \varphi(w(z)), \qquad z\in \mathbb{D}.
\end{align}
Therefore
\[
\frac{f''(z)}{f'(z)}=\frac{\varphi(w(z))-1}{z}.
\]
Since \(f'(0)=1\) and \(f'(z)\neq 0\) in \(\mathbb{D}\), an analytic branch of \(\log f'(z)\) exists in \(\mathbb{D}\). Thus
\[
(\log f'(z))'=\frac{f''(z)}{f'(z)}=\frac{\varphi(w(z))-1}{z}.
\]
Integrating from \(0\) to \(z\), we obtain the general integral representation
\begin{align}
\log f'(z)&=\int_{0}^{z} \frac{\varphi(w(t))-1}{t}\,dt, \label{eq:2.4}\\
f(z)&=\int_{0}^{z} \exp\!\left(\int_{0}^{\zeta} \frac{\varphi(w(t))-1}{t}\,dt\right)\, d\zeta. \label{eq:2.5}
\end{align}
In particular, if we choose the special Schwarz function \(w(t)=t\), then we obtain the distinguished function \(\widetilde{f}\) given by
\begin{align}
\log \widetilde{f}^\prime(z)&=\int_{0}^{z} \frac{\varphi(t)-1}{t}\,dt, \label{eq:2.6}\\
\widetilde{f}(z)&=\int_{0}^{z} \exp\!\left(\int_{0}^{\zeta} \frac{\varphi(t)-1}{t}\,dt\right)\, d\zeta. \label{eq:2.7}
\end{align}
%%%%%%%%%%%%%%%%%%%
%%%%%%%%%%%%%%%%%%%%%%%
\section{Hankel determinants}

In 1966, Pommerenke \cite{Pommerenke1966} introduced the Hankel determinants for the class $\mathcal{S}$, and this topic has subsequently been studied by many authors. For a function $f \in \mathcal{A}$ with coefficients $\{a_n\}$, the $q$th Hankel determinant is defined by
\begin{equation}\label{eq:3.1}
H_q(n)=
\begin{vmatrix}
a_n & a_{n+1} & \cdots & a_{n+q-1} \\
a_{n+1} & a_{n+2} & \cdots & a_{n+q} \\
\vdots & \vdots & \ddots & \vdots \\
a_{n+q-1} & a_{n+q} & \cdots & a_{n+2q-2}
\end{vmatrix}
\qquad q,n\in\mathbb N.
\end{equation}
For special choices of $n$ and $q$, one may obtain well-known functionals, for instance, the classical Fekete--Szeg\"o functional, is defined as
\begin{align}\label{eq:3.2}
H_2(2)=a_2 a_4-a_3^{\,2}.
\end{align}
For our case $a_1=1$, the third Hankel determinant is
\begin{equation}\label{eq:3.3}
H_3(1)=
\begin{vmatrix}
1 & a_2 & a_3 \\
a_2 & a_3 & a_4 \\
a_3 & a_4 & a_5
\end{vmatrix}
= a_3(a_2 a_4-a_3^2)-a_4(a_4-a_2 a_3)+a_5(a_3-a_2^2).
\end{equation}

Let $\mathcal{P}$ denote the class of analytic functions $p$ defined on $\mathbb{D}$ such that 
\begin{equation}\label{eq:3.4}
p(z)=1+\sum_{n=1}^{\infty} p_n z^n
\end{equation}
with $\operatorname{Re} p(z)>0$ in $\mathbb{D}$. The following lemma 
(see \cite{LiberaZlotkiewicz1982, KwonLeckoSim2018}) will be useful for the next proof.

\begin{lem}\label{lemma1}
Let $p\in \mathcal{P}$ be given by \eqref{eq:3.4} with $p_1\ge 0$. Then
\begin{align*}
2p_2&=p_1^2+\gamma\bigl(4-p_1^2\bigr),\\[2mm]
4p_3&=p_1^3+2\bigl(4-p_1^2\bigr)p_1\gamma
-\bigl(4-p_1^2\bigr)p_1\gamma^2
+2\bigl(4-p_1^2\bigr)\bigl(1-|\gamma|^2\bigr)\eta,\\[2mm]
8p_4&=p_1^4+\bigl(4-p_1^2\bigr)\gamma\bigl(p_1^2(\gamma^2-3\gamma+3)+4\gamma\bigr)\\[1mm]
&\quad
-4\bigl(4-p_1^2\bigr)\bigl(1-|\gamma|^2\bigr)
\left(p_1(\gamma-1)\eta+\overline{\gamma}\,\eta^2-(1-|\eta|^2)\rho\right),
\end{align*}
for some $\gamma,\eta,\rho$ such that $|\gamma|\le 1$, $|\eta|\le 1$, and $|\rho|\le 1$.
\end{lem}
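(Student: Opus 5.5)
The plan is to derive all three identities from the Schur parametrization of a Schwarz function. Given $p\in\mathcal P$, set $\omega=(p-1)/(p+1)$. This is analytic in $\mathbb D$ with $\omega(0)=0$ and $|\omega|<1$, so $\omega$ is a Schwarz function. Conversely,
\[
p=\frac{1+\omega}{1-\omega}=1+2\omega+2\omega^2+2\omega^3+\cdots .
\]
Writing $\omega(z)=c_1z+c_2z^2+c_3z^3+c_4z^4+\cdots$ and comparing coefficients gives
\begin{align*}
p_1&=2c_1,\qquad p_2=2c_2+2c_1^2,\qquad p_3=2c_3+4c_1c_2+2c_1^3,\\
p_4&=2c_4+4c_1c_3+2c_2^2+6c_1^2c_2+2c_1^4 .
\end{align*}

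Next I would run the Schur algorithm. Put $\omega_0(z)=\omega(z)/z$ and
\[
\omega_{k+1}(z)=\frac{1}{z}\cdot\frac{\omega_k(z)-\zeta_{k+1}}{1-\overline{\zeta_{k+1}}\,\omega_k(z)},\qquad \zeta_{k+1}=\omega_k(0).
\]
By the Schwarz lemma each $\omega_k$ maps $\mathbb D$ into $\overline{\mathbb D}$, so every $\zeta_k$ satisfies $|\zeta_k|\le 1$. Inverting these Möbius steps and expanding to order $z^3$ expresses $c_1,\dots,c_4$ through $\zeta_1,\dots,\zeta_4$:
\begin{align*}
c_1&=\zeta_1,\qquad c_2=(1-|\zeta_1|^2)\zeta_2,\qquad c_3=(1-|\zeta_1|^2)\bigl((1-|\zeta_2|^2)\zeta_3-\overline{\zeta_1}\zeta_2^2\bigr),
\end{align*}
together with an analogous (longer) formula for $c_4$ involving $\zeta_4$ multiplied by $(1-|\zeta_1|^2)(1-|\zeta_2|^2)(1-|\zeta_3|^2)$.

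If some $|\zeta_k|=1$, then $\omega$ is a finite Blaschke product and the recursion stops. In that case every later coefficient formula already carries a vanishing factor $1-|\zeta_k|^2$, so the remaining parameters may be chosen arbitrarily in the closed disk and the identities still hold.

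Then I would specialize. Since $p_1\ge0$, we get $\zeta_1=p_1/2\in[0,1]$, hence $1-|\zeta_1|^2=(4-p_1^2)/4$. I would set $\gamma=\zeta_2$, $\eta=\zeta_3$, $\rho=\zeta_4$.

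The second-coefficient identity is immediate:
\[
2p_2=4c_2+4c_1^2=(4-p_1^2)\gamma+p_1^2 .
\]
For $4p_3$, substituting $c_1,c_2,c_3$ gives
\[
8(1-\zeta_1^2)\bigl((1-|\gamma|^2)\eta-\zeta_1\gamma^2\bigr)+16\zeta_1(1-\zeta_1^2)\gamma+p_1^3 .
\]
This rearranges exactly into the stated expression, using $8\zeta_1(1-\zeta_1^2)=p_1(4-p_1^2)$.

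The main obstacle is $p_4$. It needs the explicit $c_4$, obtained by carrying the Schur recursion one more level, followed by collecting terms. I would first verify $c_4$ by expanding
\[
\omega_0=\frac{\zeta_1+z\omega_1}{1+\overline{\zeta_1}z\omega_1},\qquad \omega_1=\frac{\zeta_2+z\omega_2}{1+\overline{\zeta_2}z\omega_2},
\]
with $\omega_2=\zeta_3+z\,\frac{\zeta_4+\cdots}{\cdots}$ truncated at order $z^3$. I would then substitute into the expression for $8p_4=16c_4+32c_1c_3+16c_2^2+48c_1^2c_2+16c_1^4$. Next I would group the terms free of $\eta$ and $\rho$ to produce $p_1^4+(4-p_1^2)\gamma\bigl(p_1^2(\gamma^2-3\gamma+3)+4\gamma\bigr)$. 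Finally I would group the remaining terms, all of which carry $(4-p_1^2)(1-|\gamma|^2)$, into the bracket containing $\eta$, $\eta^2$ and $(1-|\eta|^2)\rho$.

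Realness of $\zeta_1$ simplifies this bookkeeping considerably. As a consistency check I would test the special cases $\gamma=1$ (where $p_4$ must follow from a two-point measure) and $p_1=0$.
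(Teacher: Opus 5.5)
Your proof is correct. It takes a genuinely different route from the paper, which gives no proof of this lemma: it quotes it from Libera--Z{\l}otkiewicz and Kwon--Lecko--Sim. The classical derivation there, at least for $p_2,p_3$, uses the Carath\'eodory--Toeplitz criterion. Nonnegativity of the Toeplitz determinants built from $2,p_1,p_2,\dots$ confines each new $p_n$ to a closed disk whose centre and radius depend on $p_1,\dots,p_{n-1}$. In that picture, $\gamma,\eta,\rho$ are the normalized positions of $p_2,p_3,p_4$ in these disks. Your route instead uses the Schur parameters of $\omega=(p-1)/(p+1)$.

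The step you left implicit does close up. One more level of your recursion gives
\[
c_4=(1-\zeta_1^2)\Bigl(\zeta_1^2\gamma^3-(1-|\gamma|^2)\bigl(2\zeta_1\gamma\eta+\overline{\gamma}\,\eta^2\bigr)+(1-|\gamma|^2)(1-|\eta|^2)\rho\Bigr).
\]
With $\zeta_1=p_1/2$, the part of $8p_4$ not carrying the factor $1-|\gamma|^2$ is
\[
p_1^4+3p_1^2(4-p_1^2)\gamma+(4-p_1^2)^2\gamma^2-2p_1^2(4-p_1^2)\gamma^2+p_1^2(4-p_1^2)\gamma^3
=p_1^4+(4-p_1^2)\gamma\bigl(p_1^2(\gamma^2-3\gamma+3)+4\gamma\bigr).
\]
The remaining contributions come from two places. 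From $32c_1c_3$ you get $4p_1(4-p_1^2)(1-|\gamma|^2)\eta$. From $16c_4$ you get $-4(4-p_1^2)(1-|\gamma|^2)\bigl(p_1\gamma\eta+\overline{\gamma}\,\eta^2-(1-|\eta|^2)\rho\bigr)$. Together these give exactly the stated bracket with $p_1(\gamma-1)\eta$. Your handling of $|\zeta_k|=1$ is sound, since every later parameter enters only through a factor $1-|\zeta_k|^2$.

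Here is what each approach buys. Yours is self-contained and elementary: the Schwarz lemma plus power series of M\"obius maps, rather than expanding a $5\times5$ Toeplitz determinant. It extends mechanically to $p_5,p_6,\dots$, and the degenerate cases appear transparently as finite Blaschke products. Running the recursion backwards also gives the converse (every admissible triple is realized) at no cost. The Toeplitz route obtains that converse from the sufficiency half of the Carath\'eodory--Toeplitz theorem.

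Your route also exposes something the paper hides by citing two separate sources. Your formulas for $c_2,c_3,c_4$ are exactly Lemma~\ref{lemma3}. So Lemma~\ref{lemma1} is just Lemma~\ref{lemma3} pushed through $p=(1+\omega)/(1-\omega)$ with $c_1=p_1/2$ and the same $\gamma,\eta,\rho$, and one computation serves both theorems of the paper. The Toeplitz approach, in turn, stays entirely inside $\mathcal{P}$ and describes the coefficient body of $(p_1,\dots,p_4)$ directly as nested disks, without the detour through $\omega$.
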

Next, we recall the following well-known result by Choi \textit{et al.} \cite{ChoiKimSugawa2007}, which will be useful for our main theorem.
\begin{lem}\label{lemma2} {\cite{ChoiKimSugawa2007}}
Let $A,B,C\in\mathbb R$ and define
\[
Y(A,B,C):=\max_{z\in\overline{\mathbb D}}\Bigl(|A+Bz+Cz^{2}|+1-|z|^{2}\Bigr).
\]
\begin{itemize}
\item[(i)] If $AC\ge 0$, then
\[
Y(A,B,C)=
\begin{cases}
	|A|+|B|+|C|, & \text{if } |B|\ge 2(1-|C|),\\[6pt]
	1+|A|+\dfrac{B^{2}}{4(1-|C|)}, & \text{if } |B|<2(1-|C|).
\end{cases}
\]
\item[(ii)] If $AC<0$, then
\[
Y(A,B,C)=
\begin{cases}
	1-|A|+\dfrac{B^{2}}{4(1-|C|)}, & \text{if } -4AC(C^{2}-1)\le B^{2}\ \text{and } |B|<2(1-|C|),\\[8pt]
	1+|A|+\dfrac{B^{2}}{4(1+|C|)}, & \text{if } B^{2}<\min\!\bigl\{\,4(1+|C|)^{2},-4AC(C^{2}-1)\bigr\},\\[8pt]
	R(A,B,C), & \text{otherwise,}
\end{cases}
\]
where,
\[
R(A,B,C)=
\begin{cases}
	|A|+|B|+|C|, & \text{if } |C|\bigl(|B|+4|A|\bigr)\le |AB|,\\[6pt]
	-|A|+|B|+|C|, & \text{if } |AB|\le |C|\bigl(|B|-4|A|\bigr),\\[10pt]
	\bigl(|A|+|C|\bigr)\sqrt{\,1-\dfrac{B^{2}}{4AC}\,}, & \text{otherwise.}
\end{cases}
\]
\end{itemize}
\end{lem}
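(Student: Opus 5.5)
Since the statement is a purely elementary extremal problem, I would prove it directly by reducing the two–dimensional maximization over $\overline{\mathbb D}$ to two successive one–variable problems. First I normalize the signs. Replacing $z$ by $-z$ changes $B$ into $-B$, and multiplying the polynomial by $-1$ changes all of $A,B,C$ into their negatives. Neither operation changes $Y$, $AC$ or the stated case conditions, so I may assume $A\ge 0$ and $B\ge 0$. Writing $z=re^{i\theta}$ with $0\le r\le 1$ and $t=\cos\theta\in[-1,1]$, and using $\cos 2\theta=2t^2-1$, I expand
\[
|A+Bz+Cz^2|^2 = A^2+B^2r^2+C^2r^4+2Br(A+Cr^2)\,t+2ACr^2(2t^2-1).
\]
For fixed $r$ this is a quadratic $q_r(t)$ in $t$ with leading coefficient $4ACr^2$. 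The proof then splits according to the sign of $AC$.

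\emph{Case $AC\ge 0$.} Here $q_r$ is convex in $t$, so its maximum over $[-1,1]$ is attained at an endpoint. With the normalization $A,B\ge 0$, and $C\ge 0$ forced whenever $A\neq 0$, the endpoint $t=1$ gives $\max_\theta|A+Bz+Cz^2|=|A|+|B|r+|C|r^2$. It remains to maximize
\[
g(r)=1+|A|+|B|r-(1-|C|)r^2
\]
on $[0,1]$. If $|C|\ge 1$, or if the vertex $r_0=|B|/(2(1-|C|))$ satisfies $r_0\ge 1$, the maximum is $g(1)=|A|+|B|+|C|$. Otherwise it is $g(r_0)=1+|A|+B^2/(4(1-|C|))$. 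This is exactly part (i).

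\emph{Case $AC<0$.} Now $q_r$ is concave in $t$, with critical point
\[
t_0(r)=-\frac{B(A+Cr^2)}{4ACr}.
\]
If $|t_0(r)|\ge 1$, the maximum is again at an endpoint. Since $A$ and $C$ have opposite signs, this gives $\bigl||A|-|C|r^2\bigr|+|B|r$. If $|t_0(r)|<1$, substituting $t_0$ and simplifying should yield
\[
\max_\theta|A+Bz+Cz^2|=(|A|+|C|r^2)\sqrt{1-\tfrac{B^2}{4AC}}.
\]
I then add $1-r^2$ and maximize in $r$ over the subintervals of $[0,1]$ on which each regime holds. The endpoint regime produces the candidates $1-|A|+B^2/(4(1-|C|))$ (for $|C|r^2\ge|A|$) and $1+|A|+B^2/(4(1+|C|))$ (for $|C|r^2\le|A|$), each arising from an interior vertex of a quadratic in $r$. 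It also produces the boundary values $\pm|A|+|B|+|C|$ at $r=1$. The interior regime gives $(|A|+|C|)\sqrt{1-B^2/(4AC)}$, attained at $r=1$, which is admissible exactly when $|t_0(1)|<1$, i.e.\ $|B|\,\bigl||A|-|C|\bigr|<4|A||C|$.

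\emph{Main obstacle.} The real work is the bookkeeping in case (ii): deciding, for each triple $(A,B,C)$, which candidate is the global maximum. I would do this by translating each condition ``the vertex lies in $[0,1]$ and in the correct regime'' into inequalities among $A,B,C$. The condition $B^2\ge -4AC(C^2-1)$ should come from comparing the vertex $r_0^2=B^2/(4(1-|C|)^2)$ with the threshold $r^2=|A|/|C|$ separating the two endpoint regimes. The conditions $|C|(|B|+4|A|)\le|AB|$ and $|AB|\le|C|(|B|-4|A|)$ are precisely the sign conditions $t_0(1)\le -1$ and $t_0(1)\ge 1$. The remaining step is to verify that, in each region, the displayed candidate dominates all the others. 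I expect this to follow from comparing the quantities directly and using concavity in $r$, but it is where errors are most likely. As a cross-check I would compare with Choi–Kim–Sugawa \cite{ChoiKimSugawa2007}.
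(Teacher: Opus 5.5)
The paper does not prove this lemma; it quotes it from Choi--Kim--Sugawa, so your argument has to stand on its own. Your reduction (fix $|z|=r$, maximize the quadratic $q_r$ in $t=\cos\theta$, then optimize in $r$) is the natural route. Your part (i) is correct and complete; for $A=0$, $C<0$ the maximizing endpoint is $t=-1$ rather than $t=1$, but $|B|r+|C|r^2$ is still the maximum. Your closed form $(|A|+|C|r^2)\sqrt{1-B^2/(4AC)}$ for the interior critical value is also right.

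The gap is part (ii): as printed here it is false, so no bookkeeping can close it. Two steps of your sketch are incorrect in a way that hides this.

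First, your own endpoint formula $\bigl||A|-|C|r^2\bigr|+|B|r$ gives at $r=1$ the values $|A|+|B|-|C|$ and $-|A|+|B|+|C|$, never $|A|+|B|+|C|$. With $A,B\ge 0>C$ one has $t_0(1)=B(|A|-|C|)/(4|A||C|)$. So $|C|(|B|+4|A|)\le|AB|$ is the condition $t_0(1)\ge 1$ (you swapped the two), and the value it produces is $|A+B+C|=|A|+|B|-|C|$. For example, $(A,B,C)=(1,10,-\tfrac{1}{10})$ falls in that branch and the printed lemma gives $\tfrac{111}{10}$. But $t_0(r)=25(1-r^2/10)/r>1$ on $(0,1]$, so $Y=\max_{0\le r\le 1}\bigl(2+10r-\tfrac{11}{10}r^2\bigr)=\tfrac{109}{10}$.

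Second, for $|C|<1$ the quantity $-4AC(C^2-1)$ is negative, so the printed first branch of (ii) applies whenever $|B|<2(1-|C|)$. For $(A,B,C)=(1,0,-\tfrac12)$ it gives $Y=0$, while $z=0$ already gives $2$.

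In Choi--Kim--Sugawa the threshold is $-4AC(C^{-2}-1)=4|A|(1-C^2)/|C|$ and the first branch of $R$ is $|A|+|B|-|C|$. That corrected version, together with (i), is what your method can actually establish. Note that the threshold does not come from $r_0^2\ge|A|/|C|$, which gives $B^2\ge 4|A|(1-|C|)^2/|C|$. It is exactly the comparison $1-|A|+\frac{B^2}{4(1-|C|)}\ge 1+|A|+\frac{B^2}{4(1+|C|)}$ of the two vertex values. Equivalently, it is the requirement $t_0(r_0)\le-1$ that the vertex $r_0=|B|/(2(1-|C|))$ genuinely lies in the endpoint regime.

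Even for the corrected statement, your proposal leaves the whole region-by-region comparison undone. The claim that the interior regime peaks at $r=1$ also needs an argument. With $K=\sqrt{1-B^2/(4AC)}$, the function $(|A|+|C|r^2)K+1-r^2$ is affine in $r^2$ with slope $|C|K-1$, which can be negative. One has to use that on the boundary $|t_0(r)|=1$ of the interior regime it coincides with the endpoint formula.

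These misprints do not affect the paper's theorems, since only part (i) is used (in the $H_2(2)$ estimate, where $\widetilde{A}\widetilde{C}\ge 0$).
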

We now proceed to prove the main results of this paper.
\begin{thm}\label{thm4.2}
Let $f\in \mathcal{C}(\varphi)$. Then
$
|H_2(2)|\le {1}/{36}.
$
The estimate is sharp.
\end{thm}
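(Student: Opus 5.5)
The plan is to reduce $H_2(2)=a_2a_4-a_3^2$ to the Carath\'eodory coefficients $p_1,p_2,p_3$ and then apply Lemma~\ref{lemma1}. Given $f\in\mathcal{C}(\varphi)$, take the Schwarz function $w$ from \eqref{eq:2.3} and set $p=(1+w)/(1-w)\in\mathcal{P}$, so that $w=(p-1)/(p+1)$. Rotation invariance of the class (replacing $f$ by $e^{-i\theta}f(e^{i\theta}z)$) lets us assume $p_1=c\in[0,2]$.

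\textbf{Step 1: coefficients of $f$.} Expanding $w$ in terms of the $p_n$ gives
\[
w_1=\tfrac{p_1}{2},\qquad w_2=\tfrac{p_2}{2}-\tfrac{p_1^2}{4},\qquad w_3=\tfrac{p_3}{2}-\tfrac{p_1p_2}{2}+\tfrac{p_1^3}{8}.
\]
Then $\varphi(w)=1+w+w^2/4$ has coefficients
\[
b_1=w_1,\qquad b_2=w_2+\tfrac{w_1^2}{4},\qquad b_3=w_3+\tfrac{w_1w_2}{2}.
\]
Comparing these with the Taylor coefficients of $1+zf''/f'$ gives the relations $2a_2=b_1$, $6a_3-4a_2^2=b_2$ and $12a_4-18a_2a_3+8a_2^3=b_3$. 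From them,
\[
a_2=\frac{p_1}{4},\qquad a_3=\frac{p_2}{12}+\frac{p_1^2}{96},
\]
and $a_4$ is an explicit cubic expression in $p_1,p_2,p_3$. This yields $H_2(2)$ as a polynomial in $p_1,p_2,p_3$ of weighted degree $4$.

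\textbf{Step 2: substitute Lemma~\ref{lemma1}.} Insert the formulas for $p_2$ and $p_3$ with $p_1=c$. This gives
\[
H_2(2)=\frac{1}{K}\Bigl(\alpha_1(c)+\alpha_2(c)\gamma+\alpha_3(c)\gamma^2+\alpha_4(c)\gamma^3+\alpha_5(c)\,(1-|\gamma|^2)\eta\Bigr)
\]
for suitable real polynomials $\alpha_j(c)$ and a constant $K$. The coefficient $\alpha_5(c)$ carries a factor $c(4-c^2)$. The pure $\gamma^2$ term without $c$ comes from $-a_3^2$ with $a_3=(4-c^2)\gamma/24+\dots$, so it equals $-(4-c^2)^2\gamma^2/576$. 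At $c=0$ this gives exactly $16/576=1/36$ when $|\gamma|=1$.

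\textbf{Step 3: estimate.} Use $|\eta|\le1$ and the triangle inequality with $x=|\gamma|\in[0,1]$ to get $|H_2(2)|\le G(c,x)$, where $G$ is a polynomial in $c$ and $x$. The goal is to show $G(c,x)\le 1/36$ on $[0,2]\times[0,1]$.

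This maximization is the main obstacle. The crude triangle inequality may overshoot for intermediate $c$. My first attempt is to check $\partial G/\partial x$ and reduce to the edge $x=1$, then verify that the resulting one-variable polynomial in $c$ is maximized at $c=0$. Two boundary values serve as checks: at $c=2$ the bound is $|H_2(2)|=|a_2a_4-a_3^2|$ for $w(z)=z$, which should be small. If the crude bound fails, I would instead group the terms as $|A+B\gamma+C\gamma^2|+1-|\gamma|^2$ with $\eta$ absorbed, and apply Lemma~\ref{lemma2} with $A,B,C$ depending on $c$, handling the cases $AC\ge0$ and $AC<0$ separately.

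\textbf{Step 4: sharpness.} Take $w(z)=z^2$, that is, the function $f$ defined by $1+zf''/f'=\varphi(z^2)$. Then $p_1=0$ and $p_2=2$, so $a_2=0$ and $a_3=1/6$. Hence $H_2(2)=-a_3^2=-1/36$, which shows the bound is attained.
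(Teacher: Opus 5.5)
Your proposal is correct and follows essentially the paper's route: the same coefficient formulas, the same Lemma~\ref{lemma1} substitution, and the same extremal function $w(z)=z^2$. Your first-choice Step 3 already closes without Lemma~\ref{lemma2}, because there is no $\gamma^3$ term, and with $x=|\gamma|$ the triangle inequality gives
\[
2304\,|H_2(2)|\le c^4+12c(4-c^2)+2c^2(4-c^2)x+2(4-c^2)(2-c)(4-c)x^2,
\]
which is nondecreasing in $x\in[0,1]$ and equals $64-3c^4\le 64$ at $x=1$. The positivity of $(2-c)(4-c)$ used here is exactly the fact the paper uses to verify $|\widetilde{C}|>1$ in Lemma~\ref{lemma2}(i), so the two final steps are the same estimate in different packaging.
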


\begin{pf}
Let $f\in \mathcal{C}(\varphi)$ be given by 
\[
1+z\frac{f''(z)}{f'(z)}\prec \varphi(z), \qquad z\in \mathbb{D}.
\]
To determine the second Hankel determinant, we compute
\begin{align}\label{eq:3.5}
H_2(2)=a_2 a_4-a_3^2.
\end{align}
We first note that the class $\mathcal{C}(\varphi)$ is invariant under rotations. Indeed, if
\[
f_\theta(z):=e^{-i\theta}f(e^{i\theta}z),
\]
then \(f_\theta\in \mathcal{C}(\varphi)\), and
\[
H_2(2;f_\theta)=e^{2i(2)\theta}H_2(2;f)=e^{4i\theta}H_2(2;f).
\]
Hence \(|H_2(2;f_\theta)|=|H_2(2;f)|\), and therefore, without loss of generality, we may assume that the first coefficient of the associated Carath\'eodory function is nonnegative real.
Since $f\in \mathcal{C}(\varphi)$, there exists a Schwarz function $\omega(z)$ satisfying
\[
\omega(z)=\sum_{k=1}^{\infty} \omega_k z^k
\]
such that
\begin{equation}\label{eq:3.6}
1+\frac{z f''(z)}{f'(z)}
=
1+\omega(z)+\frac{1}{4}\omega(z)^2
=
\varphi(\omega(z)),
\qquad z\in \mathbb{D}.
\end{equation}
Let
\[
\omega(z)=\frac{p(z)-1}{p(z)+1},
\text{ where }
p(z)=1+p_1 z+p_2 z^2+p_3 z^3+p_4 z^4+\cdots \in \mathcal{P}.
\]
By the rotational property above, we may assume that $p_1\ge 0$. Since $p\in\mathcal{P}$, the Carath\'eodory estimate gives $|p_1|\le 2$, and therefore
$
0\le p_1\le 2.
$
After substituting the expressions for $\omega(z)$, $p(z)$, and
\[
f(z)=z+a_2 z^2+a_3 z^3+a_4 z^4+a_5 z^5+\cdots
\]
into \eqref{eq:3.6}, and comparing  coefficients of both sides, we obtain
\begin{equation}\label{eq:3.7}
\left.
\begin{aligned}
	a_2 &= \frac{p_1}{4}, \\[6pt]
	a_3 &= \frac{1}{96}\left(p_1^2+8p_2\right), \\[6pt]
	a_4 &= \frac{-p_1^3+32p_3}{768}, \\[6pt]
	a_5 &= \frac{
		11p_1^4
		-48p_1^2p_2
		-64p_1p_3
		-96p_2^2
		+768p_4
	}{30720}.
\end{aligned}
\right\}
\end{equation}
Substituting \eqref{eq:3.7} into \eqref{eq:3.5}, we obtain
\begin{align}\label{eq:3.8}
2304\,H_2(2)
=
-p_1^4-4p_1^2p_2-16p_2^2+24p_1p_3.
\end{align}
We now apply the parametric representation from Lemma~\ref{lemma1}, expressing $p_2$ and $p_3$ in terms of $p_1,\gamma,\eta$, namely
\[
2p_2=p_1^2+\gamma(4-p_1^2),
\]
and
\[
4p_3
=
p_1^3
+2(4-p_1^2)p_1\gamma
-(4-p_1^2)p_1\gamma^2
+2(4-p_1^2)(1-|\gamma|^2)\eta.
\]
Substituting these into \eqref{eq:3.8}, we obtain
\begin{align}\label{eq:3.9}
2304\,H_2(2)
&=
-p_1^4
-2p_1^2\bigl(p_1^2+\gamma(4-p_1^2)\bigr)
-4\bigl(p_1^2+\gamma(4-p_1^2)\bigr)^2 \notag\\
&\quad
+6p_1\Bigl(
p_1^3
+2\gamma p_1(4-p_1^2)
-\gamma^2 p_1(4-p_1^2)
+2\eta(1-|\gamma|^2)(4-p_1^2)
\Bigr).
\end{align}
Further, we can write
\[
2304\,H_2(2)=\Phi(p_1,\gamma,\eta),
\]
where $|\gamma|\le 1$, $|\eta|\le 1$, and $\Phi$ is of the form
\begin{align}\label{eq:3.10}
\Phi(p_1,\gamma,\eta)
=
A+B\gamma+C\gamma^2+\eta D_0(1-|\gamma|^2),
\end{align}
with
\begin{align*}
A&=-p_1^4,\\[4pt]
B&=-2p_1^2(-4+p_1^2)=2p_1^2(4-p_1^2),\\[4pt]
C&=2(-32+4p_1^2+p_1^4),
\end{align*}
and
\begin{align}\label{eq:3.11}
D_0:=12p_1(4-p_1^2)\ge 0.
\end{align}
We separate the endpoint cases $p_1=0$ and $p_1=2$.\\[2mm]
If $p_1=0$, then \eqref{eq:3.9} gives
\[
2304\,H_2(2)\big|_{p_1=0}=-64\,\gamma^2,
\]
and hence
\begin{equation}\label{eq:3.12}
|2304\,H_2(2)|\le 64.
\end{equation}
If $p_1=2$, then \eqref{eq:3.9} gives
\[
2304\,H_2(2)\big|_{p_1=2}=-16,
\qquad\text{that is,}\qquad
H_2(2)\big|_{p_1=2}=-\frac{1}{144}.
\]
In particular,
\begin{equation}\label{eq:3.13}
|2304\,H_2(2)|=16<64.
\end{equation}
We now consider the remaining case
$
0<p_1<2.
$
Then $D_0>0$, and for fixed $p_1$ and $\gamma$, if we write
\[
Z(\gamma):=A+B\gamma+C\gamma^2,
\]
we have
\[
\max_{|\eta|\le 1}\bigl|Z(\gamma)+\eta D_0(1-|\gamma|^2)\bigr|
=
|Z(\gamma)|+D_0(1-|\gamma|^2).
\]
Therefore
\begin{align}\label{eq:3.14}
\max_{|\gamma|\le 1,\,|\eta|\le 1}|2304\,H_2(2)|
&=
D_0\max_{|\gamma|\le 1}\Bigl(
|\widetilde{A}+\widetilde{B}\gamma+\widetilde{C}\gamma^2|
+1-|\gamma|^2
\Bigr),
\end{align}
where
\[
\widetilde{A}:=\frac{A}{D_0},
\qquad
\widetilde{B}:=\frac{B}{D_0},
\qquad
\widetilde{C}:=\frac{C}{D_0}.
\]
In particular,
\[
\widetilde{A}=\frac{-p_1^4}{12p_1(4-p_1^2)},
\qquad
\widetilde{B}=\frac{p_1}{6},
\qquad
\widetilde{C}=-\frac{8+p_1^2}{6p_1}.
\]
Since $0<p_1<2$, these are all real. Moreover,
\begin{align*}
\widetilde{A}\widetilde{C}
=
\frac{p_1^2(8+p_1^2)}{72(4-p_1^2)}
\ge 0,
\end{align*}
so Lemma~\ref{lemma2}(i) applies.\\[2mm]
Next, for $0<p_1<2$ we have
\[
|\widetilde{C}|=\frac{8+p_1^2}{6p_1}>1,
\]
because
\[
8+p_1^2-6p_1=(p_1-2)(p_1-4)>0
\qquad \text{for } 0<p_1<2.
\]
Hence
\[
2(1-|\widetilde{C}|)<0,
\]
and therefore the condition
\[
|\widetilde{B}|\ge 2(1-|\widetilde{C}|)
\]
is  satisfied. Thus, by lemma~\ref{lemma2}(i), we obtain
\begin{align}\label{eq:3.15}
\max_{|\gamma|\le 1,\,|\eta|\le 1}|2304\,H_2(2)|
&=
D_0\bigl(|\widetilde{A}|+|\widetilde{B}|+|\widetilde{C}|\bigr) \notag\\
&=
|A|+|B|+|C|.
\end{align}	
Now, for $0<p_1<2$, we have
\[
A=-p_1^4<0,
\qquad
B=2p_1^2(4-p_1^2)>0,
\]
and also
\[
C=2(-32+4p_1^2+p_1^4)<0,
\]
since
\[
p_1^4+4p_1^2<16+16=32.
\]
Therefore,
\begin{align}\label{eq:3.16}
|A|+|B|+|C|
&=
p_1^4+2p_1^2(4-p_1^2)-2(-32+4p_1^2+p_1^4) \notag\\
&=
p_1^4+(8p_1^2-2p_1^4)+(64-8p_1^2-2p_1^4) \notag\\
&=
64-3p_1^4
<64.
\end{align}
Hence, for every $p_1\in(0,2)$,
\begin{equation}\label{eq:3.17}
|2304\,H_2(2)|<64.
\end{equation}
Combining \eqref{eq:3.12}, \eqref{eq:3.13}, and \eqref{eq:3.17}, we conclude that
\[
2304\,|H_2(2)|\le 64,
\quad p_1\in[0,2].
\]
Therefore,
\[
|H_2(2)|\le \frac{64}{2304}=\frac{1}{36}.
\]
It remains to show sharpness. Take
\[
\omega(z)=z^2,
\]
and denote the corresponding function in $\mathcal{C}(\varphi)$ by $\widetilde{f}$. Then
\[
1+z\frac{\widetilde{f}''(z)}{\widetilde{f}'(z)}=\varphi(z^2).
\]
Therefore,
\begin{align*}
\widetilde{f}(z)
&=
\int_0^z
\exp\!\left(
\int_0^\zeta \frac{\varphi(t^2)-1}{t}\,dt
\right)d\zeta \\[4pt]
&=
z+\frac{1}{6}z^3+\frac{3}{80}z^5+\frac{5}{672}z^7+\frac{19}{13824}z^9+\cdots.
\end{align*}
Thus, we have 
\[
a_2=0,\qquad a_3=\frac{1}{6},\qquad a_4=0,
\]
Hence, we obtain
\[
H_2(2)=a_2 a_4-a_3^2
=
0\cdot 0-\left(\frac{1}{6}\right)^2
=
-\frac{1}{36}.
\]
Thus the estimate is sharp.
\end{pf}\\
%%%%%%%%%%%%%%
The following lemma \cite{ProkhorovSzynal1981} will be useful for the proof of our next theorem.
%%%%%%%%%%%%%%
\begin{lem}\label{lemma3}
Let
$
\omega(z)=\sum_{n=1}^{\infty}c_n z^n
$
be a Schwarz function, that is, $\omega$ is analytic in $\mathbb D$,
$\omega(0)=0$, and $|\omega(z)|<1$ for all $z\in\mathbb D$.
If $c_1\ge 0$, then there exist complex numbers $\gamma,\eta,\rho$
with
$
|\gamma|\le 1,\, |\eta|\le 1,\, |\rho|\le 1,
$
such that
\begin{align*}
c_2 &= (1-c_1^2)\gamma,\\[2mm]
c_3 &= (1-c_1^2)\bigl(\eta(1-|\gamma|^2)-c_1\gamma^2\bigr),\\[2mm]
c_4 &= (1-c_1^2)\Bigl(
c_1^2\gamma^3
-(1-|\gamma|^2)\bigl(2c_1\gamma\eta+\overline{\gamma}\eta^2\bigr)
+(1-|\gamma|^2)(1-|\eta|^2)\rho
\Bigr).
\end{align*}
\end{lem}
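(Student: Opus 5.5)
The plan is the Schur algorithm: strip off the coefficients of $\omega$ one at a time by Möbius maps of the disk, then invert and expand in powers of $z$. I will use only the Schwarz lemma and the fact that $z\mapsto (z-a)/(1-\overline{a}z)$ maps $\overline{\mathbb D}$ onto itself for $|a|<1$.

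\textbf{Step 1 (reduction).} Since $\omega(0)=0$, the function $\omega_1(z):=\omega(z)/z$ is analytic in $\mathbb D$ with $|\omega_1|\le 1$ and $\omega_1(0)=c_1\in[0,1]$. If $c_1=1$, the maximum principle forces $\omega_1\equiv 1$, i.e. $\omega(z)=z$. Then $c_2=c_3=c_4=0$, and the formulas hold for any choice of $\gamma,\eta,\rho$, since every right-hand side carries the factor $1-c_1^2=0$.

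\textbf{Step 2 (first Schur step).} Assume $0\le c_1<1$. The function $(\omega_1-c_1)/(1-c_1\omega_1)$ is a self-map of $\overline{\mathbb D}$ vanishing at $0$, so by the Schwarz lemma
\[
\omega_2(z):=\frac{\omega_1(z)-c_1}{z\,(1-c_1\omega_1(z))}
\]
is analytic with $|\omega_2|\le 1$. Put $\gamma:=\omega_2(0)$.
\begin{itemize}
\item If $|\gamma|<1$, repeat the construction: $\omega_3:=(\omega_2-\gamma)/\bigl(z(1-\overline{\gamma}\omega_2)\bigr)$, with $\eta:=\omega_3(0)$.
\item If $|\eta|<1$, repeat once more: $\omega_4:=(\omega_3-\eta)/\bigl(z(1-\overline{\eta}\omega_3)\bigr)$, with $\rho:=\omega_4(0)$.
\end{itemize}
In the degenerate cases $|\gamma|=1$ (so $\omega_2\equiv\gamma$) or $|\eta|=1$ (so $\omega_3\equiv\eta$), the later parameters are arbitrary, e.g. $0$. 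The factors $1-|\gamma|^2$ and $1-|\eta|^2$ then kill their contributions, so the formulas below remain valid.

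\textbf{Step 3 (inversion and expansion).} Inverting the Möbius maps gives
\[
\omega_1=\frac{c_1+z\omega_2}{1+c_1z\omega_2},\qquad
\omega_2=\frac{\gamma+z\omega_3}{1+\overline{\gamma}z\omega_3},\qquad
\omega_3=\frac{\eta+z\omega_4}{1+\overline{\eta}z\omega_4}.
\]
Expanding geometric series, it suffices to track terms up to the needed order:
\[
\omega_3=\eta+(1-|\eta|^2)\rho z+O(z^2).
\]
Substituting this into the middle identity gives
\[
\omega_2=\gamma+(1-|\gamma|^2)\eta z+(1-|\gamma|^2)\bigl((1-|\eta|^2)\rho-\overline{\gamma}\eta^2\bigr)z^2+O(z^3).
\]
Finally, the first identity expands as
\[
\omega_1=c_1+(1-c_1^2)\bigl(z\omega_2-c_1z^2\omega_2^2+c_1^2z^3\omega_2^3\bigr)+O(z^4).
\]
Since $c_{k+1}$ is the coefficient of $z^k$ in $\omega_1$, reading off orders $1,2,3$ gives:
\begin{itemize}
\item $c_2=(1-c_1^2)\gamma$;
\item $c_3=(1-c_1^2)\bigl((1-|\gamma|^2)\eta-c_1\gamma^2\bigr)$;
\item $c_4=(1-c_1^2)\Bigl((1-|\gamma|^2)\bigl((1-|\eta|^2)\rho-\overline{\gamma}\eta^2\bigr)-2c_1\gamma(1-|\gamma|^2)\eta+c_1^2\gamma^3\Bigr)$.
\end{itemize}
The last line rearranges to the stated formula for $c_4$.

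\textbf{Main obstacle.} There is no conceptual difficulty. The work lies in the careful third-order bookkeeping for $c_4$, in particular the cross term $-2c_1\gamma\eta$ coming from $\omega_2^2$, and in checking that the degenerate boundary cases $c_1=1$, $|\gamma|=1$, $|\eta|=1$ are consistently absorbed by the vanishing factors.
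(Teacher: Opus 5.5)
Your proof is correct and complete. Note that the paper does not prove Lemma~\ref{lemma3} at all; it quotes the result from Prokhorov and Szynal. Your Schur-algorithm argument therefore supplies a self-contained derivation of something the paper treats as a black box.

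I checked the third-order bookkeeping. Write $\omega_2=\gamma+\alpha z+\beta z^2+O(z^3)$ with $\alpha=(1-|\gamma|^2)\eta$ and $\beta=(1-|\gamma|^2)\bigl((1-|\eta|^2)\rho-\overline{\gamma}\eta^2\bigr)$. Then the $z^3$-coefficient of $z\omega_2-c_1z^2\omega_2^2+c_1^2z^3\omega_2^3$ is $\beta-2c_1\gamma\alpha+c_1^2\gamma^3$, which is exactly the stated $c_4/(1-c_1^2)$.

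The degenerate cases are also handled correctly. For $|\gamma|=1$ one even has $(\gamma+t)/(1+\overline{\gamma}t)\equiv\gamma$, so the inversion identity remains formally valid with any choice of $\omega_3$.

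Compared with a bare citation, your route costs half a page. In return it uses nothing beyond the Schwarz lemma and disk automorphisms, and it makes the normalization $c_1\ge 0$ and the exact shape of the $c_4$ formula transparent rather than taken on trust. It also identifies $\gamma,\eta,\rho$ as the first three Schur parameters of $\omega(z)/z$, which gives the converse for free. Run the three Möbius inversions backwards starting from $\omega_4\equiv\rho$: this shows that for every $c_1\in[0,1)$, every triple $(\gamma,\eta,\rho)$ in the closed polydisk is realized by some Schwarz function. The lemma does not need this, but it is the fact one wants when a bound obtained by maximizing over the parameters is to be claimed attainable.
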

%%%%%%%%%%%%%%
%%%%%%%%%%%%%%%%%
\begin{thm}
Let $f\in \mathcal{C}(\varphi)$. Then $|H_3(1)|\leq {1}/{144}$, where $H_3(1)$ denotes the third Hankel determinant. The result is sharp.
\end{thm}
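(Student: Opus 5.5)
We will prove the bound in terms of the coefficients of the Schwarz function and then apply Lemma~\ref{lemma3}; the candidate extremal function comes from $\omega(z)=z^3$. As in the proof of Theorem~\ref{thm4.2}, the class $\mathcal{C}(\varphi)$ is invariant under rotations $f_\theta(z)=e^{-i\theta}f(e^{i\theta}z)$. A direct check shows $H_3(1;f_\theta)=e^{-4i\theta}H_3(1;f)$, since every term of $H_3(1)$ has weight $a_2a_3a_4\sim e^{i(1+2+3)\theta}$ relative to $a_1$; in any case $|H_3(1)|$ is invariant. Hence we may assume $c_1=:c\in[0,1]$ for the Schwarz function $\omega(z)=\sum c_nz^n$ with $1+zf''/f'=\varphi(\omega)$.

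\textbf{Coefficient formulas.} Write
\[
q(z):=\omega+\tfrac14\omega^2=c_1z+\bigl(c_2+\tfrac14c_1^2\bigr)z^2+\bigl(c_3+\tfrac12c_1c_2\bigr)z^3+\bigl(c_4+\tfrac12c_1c_3+\tfrac14c_2^2\bigr)z^4+\cdots .
\]
Comparing coefficients in $zf''=f'q$ expresses $a_2,\dots,a_5$ as polynomials in $c_1,\dots,c_4$. For example, $a_2=c_1/2$ and $6a_3=q_2+2a_2q_1$. Substituting these into \eqref{eq:3.3} gives $H_3(1)$ as a polynomial in $c_1,\dots,c_4$.

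\textbf{Parametrization.} We then insert Lemma~\ref{lemma3}, which yields
\[
H_3(1)=\Psi(c,\gamma,\eta,\rho),\qquad |\gamma|,|\eta|,|\rho|\le 1.
\]
Here $\rho$ enters only linearly through $c_4$, which appears in $a_5$ with coefficient $a_5(a_3-a_2^2)$; its coefficient is $(1-c^2)(1-|\gamma|^2)(1-|\eta|^2)$ times an explicit polynomial in $c,\gamma$.

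\textbf{Endpoint cases.}
\begin{itemize}
\item If $c=1$, then $\omega(z)=z$, all $c_n=0$ for $n\ge2$, and $H_3(1)$ is an explicit number. We will check that it is smaller than $1/144$ in modulus.
\item If $c=0$, then $a_2=0$, $a_3=\gamma/6$, $a_4$ is a multiple of $\eta(1-|\gamma|^2)$, and $a_5$ involves $c_4$ and $\gamma^2$. In this case
\[
H_3(1)=a_3a_5-a_3^3-a_4^2,
\]
which is simple enough to bound directly. At $\gamma=0$, $|\eta|=1$ we get exactly $|a_4|^2=1/144$, since $a_4=c_3/12$ when $c_1=c_2=0$.
\end{itemize}

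\textbf{Interior case and main obstacle.} For $0<c<1$ we apply the triangle inequality. We replace $|\rho|$ by $1$ and $|\gamma|,|\eta|$ by $x,y\in[0,1]$, bounding each monomial by its modulus, to get
\[
|H_3(1)|\le F(c,x,y).
\]
The remaining task is to prove $F\le 1/144$ on $[0,1]^3$, and we expect this to be the main obstacle.

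The crude triangle inequality may overshoot near $c=0$, because the extremal point $(0,0,1)$ lies on the boundary. What I would try first is to keep the $\eta$-dependence together. The terms $\eta$, $\eta^2$ and $(1-|\eta|^2)\rho$ combine into an expression of the form
\[
|A+B\eta+C\eta^2|+D(1-|\eta|^2).
\]
Lemma~\ref{lemma2} then bounds this in closed form, as was done for $\gamma$ in Theorem~\ref{thm4.2}. The result is a two-variable function of $(c,x)$. We then show it is maximized at $c=0$, $x=0$ by checking monotonicity in $c$ (the derivative is dominated by negative $c^2$-terms for small $c$) and comparing values on the boundary edges of $[0,1]^2$.

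\textbf{Sharpness.} Take $\omega(z)=z^3$. Then
\[
1+\frac{zf''}{f'}=1+z^3+\frac{z^6}{4}\quad\Longrightarrow\quad f'(z)=1+\frac{z^3}{3}+\cdots .
\]
Hence $a_2=a_3=a_5=0$ and $a_4=1/12$. This gives
\[
H_3(1)=-a_4^2=-\frac{1}{144},
\]
so the bound is attained.
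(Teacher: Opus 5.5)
\textbf{There is a genuine gap: the interior estimate, which is the substance of the theorem, is never proved, and both tools you propose for it fail.}

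The parts you do carry out agree with the paper: the rotation to $c_1\ge 0$, the recursion from $zf''=f'(\omega+\omega^2/4)$, Lemma~\ref{lemma3}, the endpoint cases, and the extremal function for $\omega(z)=z^3$. Your computation $a_2=a_3=a_5=0$, $a_4=1/12$, $H_3(1)=-1/144$ is correct. The rotation factor is $e^{6i\theta}$, not $e^{-4i\theta}$, but this is harmless. What is missing is a proof that the majorant of $69120\,|H_3(1)|$ stays at or below $480$ for $0<c<1$.

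\emph{Lemma~\ref{lemma2} does not apply.} It requires real $A,B,C$. That held in Theorem~\ref{thm4.2} because the coefficients there depended only on $p_1$. Here the coefficients of $1,\eta,\eta^2$ involve $\gamma,\gamma^2,\bar\gamma$ with $\gamma$ complex. If you force them to be real by taking termwise moduli, case (i) simply returns $\max_{0\le y\le 1}$ of the termwise majorant. So you gain nothing over the triangle inequality.

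\emph{Monotonicity in $c$ is false.} The coefficient of $\eta$ is
$96c(1-c^2)(1-|\gamma|^2)\bigl(\gamma+c^2+2c^2\gamma-2(1-c^2)\gamma^2\bigr)$.
At $y=1$ it contributes $96c\,x(1-x^2)(1+2x)$ to first order in $c$, and every other contribution is even in $c$. Hence for each fixed $x\in(0,1)$ the majorant at $y=1$ strictly increases in $c$ near $c=0$. For small $c$, $y=1$ is also where the maximum over $y$ lies. The maximum at $(0,0)$ is therefore a genuinely two-variable effect, not something a one-variable monotonicity argument can capture.

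\textbf{How the paper closes this step.} Your worry that the crude triangle inequality overshoots is unfounded. With $x=|\gamma|$ and $y=|\eta|$, the termwise majorant has maximum exactly $480$, attained only at $(c,x,y)=(0,0,1)$. The paper proves this as follows.
\begin{enumerate}
\item Replace the term linear in $y$ (its coefficient is nonnegative) by its value at $y=1$. The result is affine in $y^2$, so only $y=0$ and $y=1$ need checking.
\item At $y=0$, the Bernstein coefficients of $G_2$ on $[0,1]^2$ give $G_2\le 1022/3$.
\item At $y=1$, Bernstein bounds for $G_1$ on a dyadic subdivision stay below $480$ everywhere except on $[0,1/4]^2$.
\item On that corner square, $480-G_1=960c^2-96cx+384x^2+R$, where $R$ has total degree at least $3$. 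The positive definite quadratic part is at least $336(c^2+x^2)$, while $|R|\le\frac{543349}{2048}(c^2+x^2)$, so the quadratic part dominates.
\end{enumerate}
You need a certified bound of this kind over the whole square, controlling both variables at once near the corner, to complete your proof.
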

\begin{pf}
Let $f\in \mathcal{C}(\varphi)$ be given by 
\[
1+z\frac{f''(z)}{f'(z)} \prec \varphi(z), \quad z\in \mathbb{D}.
\]
The third Hankel determinant is given  by
\begin{align}\label{eq:3.18}
H_3(1) =
\begin{vmatrix}
	1 & a_2 & a_3 \\
	a_2 & a_3 & a_4 \\
	a_3 & a_4 & a_5
\end{vmatrix}
= a_3(a_2 a_4 - a_3^2) - a_4(a_4 - a_2 a_3) + a_5(a_3 - a_2^2).
\end{align}
For the third Hankel determinant we use the coefficients of the Schwarz function together with the relation between the Carath\'eodory and Schwarz functions:
\[
\omega(z):=\frac{p(z)-1}{p(z)+1}, \quad z\in \mathbb{D},
\]
where
\[
\omega(z)=\sum_{n=1}^{\infty}c_n z^n
= c_1 z+c_2 z^2+c_3 z^3+\cdots.
\]
Now comparing the coefficients in \eqref{eq:3.6}, while using  $\omega(z)$ in the series form as above, we obtain
\begin{align}\label{eq:3.19}
\left.
\begin{aligned}
	a_{2} & = \frac{c_{1}}{2},\\[1mm]
	a_{3} & = \frac{1}{24}\left(5c_{1}^{2}+4c_{2}\right),\\[1mm]
	a_{4} & = \frac{1}{24}\left(-2c_{1}^{3}+c_{1}c_{2}+\frac{3}{4}c_{1}\left(5c_{1}^{2}+4c_{2}\right)+2c_{3}\right),\\[1mm]
	a_{5} & = \frac{43c_{1}^{4}+184c_{1}^{2}c_{2}+72c_{2}^{2}+176c_{1}c_{3}+96c_{4}}{1920}.
\end{aligned}
\right\}
\end{align}
Now putting \eqref{eq:3.19} in \eqref{eq:3.18}, we obtain
\begin{align}\label{eq:3.20}
69120 H_{3}(1)&=-7c_{1}^{6}+42c_{1}^{4}c_{2}+96c_{1}^{3}c_{3}+96c_{1}c_{2}c_{3}
-12c_{1}^{2}\left(17c_{2}^{2}+12c_{4}\right)\notag\\[2mm]
&\quad+16\left(7c_{2}^{3}-30c_{3}^{2}+36c_{2}c_{4}\right).
\end{align}
Since the class is rotation invariant, we may take $c_{1} \geq 0$. Moreover, since \(\omega\) is a Schwarz function, the Schwarz lemma gives \(|c_1|\le 1\). Hence
\[
0\le c_1\le 1.
\]
Now we use the parametric relation between the coefficients of the Schwarz functions in Lemma~\ref{lemma3} to obtain
\begin{align}
69120\,  H_{3}(1)&=-7c_{1}^{6}
+42\gamma\,c_{1}^{4}(1-c_{1}^{2})+96c_{1}^{3}\bigl(\eta(1-|\gamma|^{2})-\gamma^{2}c_{1}\bigr)(1-c_{1}^{2})\notag\\
&
\quad+96\gamma\,c_{1}\bigl(\eta(1-|\gamma|^{2})-\gamma^{2}c_{1}\bigr)(1-c_{1}^{2})^{2} \notag\\
&\quad
-12c_{1}^{2}\Bigl(
17\gamma^{2}(1-c_{1}^{2})^{2}
+12(1-c_{1}^{2})\notag\\
&\qquad
\Bigl(\gamma^{3}c_{1}^{2}
-(1-|\gamma|^{2})\bigl(-\rho(1-|\eta|^{2})+2\gamma\eta c_{1}+\eta^{2}\overline{\gamma}\bigr)\Bigr)
\Bigr) \notag\\
&\quad
+16\Bigl(
-30\bigl(\eta(1-|\gamma|^{2})-\gamma^{2}c_{1}\bigr)^{2}(1-c_{1}^{2})^{2}
+7\gamma^{3}(1-c_{1}^{2})^{3} \notag\\
&\quad
+36\gamma(1-c_{1}^{2})^{2}\Bigl(\gamma^{3}c_{1}^{2}
-(1-|\gamma|^{2})\bigl(-\rho(1-|\eta|^{2})+2\gamma\eta c_{1}+\eta^{2}\overline{\gamma}\bigr)\Bigr)
\Bigr).
\end{align}
We may re-write the above equation in the form
\begin{align}\label{eq:3.22}
69120\,  H_3(1)=A_{1}+B_{1} \eta+C_{1} \eta^2+D_{1} \rho,
\qquad  \mbox{where} \quad |\gamma|\le 1,\ |\eta|\le 1,\ |\rho|\le 1.
\end{align}
where
\begin{align*}
A_{1}&=-7c_{1}^{6}-42\gamma\,c_{1}^{4}(-1+c_{1}^{2})
+96\gamma^{4}c_{1}^{2}(-1+c_{1}^{2})^{2}
-12\gamma^{2}c_{1}^{2}\left(17-26c_{1}^{2}+9c_{1}^{4}\right)\\
&\quad-16\gamma^{3}\left(-7+27c_{1}^{2}-24c_{1}^{4}+4c_{1}^{6}\right),\\[2mm]
B_{1}&=96\left(-1+|\gamma|^{2}\right)c_{1}(-1+c_{1}^{2})
\left(\gamma+c_{1}^{2}+2\gamma c_{1}^{2}+2\gamma^{2}(-1+c_{1}^{2})\right),\\[2mm]
C_{1}&=-48\left(-1+|\gamma|^{2}\right)(-1+c_{1}^{2})
\Bigl(-10(-1+c_{1}^{2})+10|\gamma|^{2}(-1+c_{1}^{2})
-3\bigl(c_{1}^{2}+4\gamma(-1+c_{1}^{2})\bigr)\overline{\gamma}\Bigr),\\[2mm]
D_{1}&=144\left(-1+|\gamma|^{2}\right)\left(-1+|\eta|^{2}\right)(-1+c_{1}^{2})
\left(c_{1}^{2}+4\gamma(-1+c_{1}^{2})\right).
\end{align*}
Taking modulus on both sides in \eqref{eq:3.22}, writing
\[
p_1:=c_1,\qquad x:=|\gamma|\in[0,1],\qquad y:=|\eta|\in[0,1],
\]
and using $|\rho|\le 1$ together with the triangle inequality, we obtain
\begin{align*}
69120\, |H_{3}(1)| \leq H(p_1, x, y),
\end{align*}
where
\begin{align}\label{eq:3.23}
H(p_1,x,y)
&= 7p_1^{6}
+ x\Bigl(42p_1^{4}(1-p_1^{2})\Bigr)
+ x^{2}\Bigl(12p_1^{2}\bigl(17-26p_1^{2}+9p_1^{4}\bigr)\Bigr)\notag\\
&\quad
+ x^{3}\Bigl(16\bigl(7(1-p_1^{2})+20p_1^{2}(1-p_1^{2})+4p_1^{4}(1-p_1^{2})\bigr)\Bigr)
+ x^{4}\Bigl(96p_1^{2}(-1+p_1^{2})^{2}\Bigr)\notag\\
&\quad
+ y\Bigl(96(1-x^{2})p_1(1-p_1^{2})\bigl(x+p_1^{2}+2xp_1^{2}+2x^{2}(1-p_1^{2})\bigr)\Bigr)\notag\\
&\quad
+ y^{2}\Bigl(48(1-x^{2})(1-p_1^{2})\bigl(10(1-x^{2})(1-p_1^{2})
+3\bigl(p_1^{2}+4x(1-p_1^{2})\bigr)x\bigr)\Bigr)\notag\\
&\quad
+ 144(1-x^{2})(1-y^{2})(1-p_1^{2})\bigl(p_1^{2}+4x(1-p_1^{2})\bigr).
\end{align}
over the cuboid $\mathcal{D}:=\{(p_1, x, y):(p_1, x, y)\in [0, 1] \times [0, 1] \times [0, 1]\}$, which we denote by $\mathcal{D}$. Our aim is to maximize the expression $H(p_1, x, y)$ over this region $\mathcal{D}$.\\[2mm]
It is easy to see that the coefficient of $y$ remains non-negative in $\mathcal{D}$. Therefore, we take $y=1$ in the term
\[
y\Bigl(96(1-x^{2})p_1(1-p_1^{2})\bigl(x+p_1^{2}+2xp_1^{2}+2x^{2}(1-p_1^{2})\bigr)\Bigr)
\]
to make the computation easier. Furthermore, we define a new function $H_{1}(p_1, x, y)$ such that $H(p_1, x, y) \leq H_{1}(p_1, x, y)$ in $\mathcal{D}$. Precisely,
\begin{align}\label{eq:3.24}
H_1(p_1,x,y)
&= 7p_1^{6}
+ x\Bigl(42p_1^{4}(1-p_1^{2})\Bigr)
+ x^{2}\Bigl(12p_1^{2}\bigl(17-26p_1^{2}+9p_1^{4}\bigr)\Bigr)\notag\\
&\quad
+ x^{3}\Bigl(16\bigl(7(1-p_1^{2})+20p_1^{2}(1-p_1^{2})+4p_1^{4}(1-p_1^{2})\bigr)\Bigr)
+ x^{4}\Bigl(96p_1^{2}(-1+p_1^{2})^{2}\Bigr)\notag\\
&\quad
+ 96(1-x^{2})p_1(1-p_1^{2})\bigl(x+p_1^{2}+2xp_1^{2}+2x^{2}(1-p_1^{2})\bigr)\notag\\
&\quad
+ y^{2}\Bigl(48(1-x^{2})(1-p_1^{2})\bigl(10(1-x^{2})(1-p_1^{2})
+3\bigl(p_1^{2}+4x(1-p_1^{2})\bigr)x\bigr)\Bigr)\notag\\
&\quad
+ 144(1-x^{2})(1-y^{2})(1-p_1^{2})\bigl(p_1^{2}+4x(1-p_1^{2})\bigr).
\end{align}
Since $H_{1}(p_{1}, x, y)$ is quadratic in $y$ and the coefficient of $y$ is $0$, the maximum occurs at one of the endpoints $y=0$ or $y=1$. We handle these two cases separately.\\[2mm]
Set $G_{1}(p_1, x):=H_{1}(p_{1}, x, 1)$ and $G_{2}(p_1, x):=H_{1}(p_{1}, x, 0)$. \\[2mm]
For the case when $y=1$ we prove that the function $G_{1}(p_1, x)\le 480$ in $[0, 1]\times[0, 1]$. For the same purpose we take $480-G_{1}(p_{1}, x)$ and prove that it remains non-negative in $[0, 1] \times[0, 1]$.\\
\[
\begin{aligned}
G_1(p_1,x)=\;&7 p_1^6 + 42 p_1^4 (1-p_1^2)x + 12 p_1^2(17-26p_1^2+9p_1^4)x^2 \\
&+16\bigl(7(1-p_1^2)+20p_1^2(1-p_1^2)+4p_1^4(1-p_1^2)\bigr)x^3 \\
&+96p_1^2(-1+p_1^2)^2x^4 \\
&+96p_1(1-p_1^2)(1-x^2)\bigl(p_1^2+x+2p_1^2x+2(1-p_1^2)x^2\bigr) \\
&+48(1-p_1^2)(1-x^2)\bigl(3x(p_1^2+4(1-p_1^2)x)+10(1-p_1^2)(1-x^2)\bigr).
\end{aligned}
\]
Taking
\[
p:=p_1,
\qquad
F(p,x):=480-G_1(p,x),
\]
we show that
\[
F(p,x)\ge 0
\qquad \text{for all }(p,x)\in[0,1]\times[0, 1],
\]
with equality only at $(0,0)$.
We use the fact that if a polynomial $H(u,v)$ is written on the unit square in the  Bernstein basis
\[
H(u,v)=\sum_{i=0}^{m}\sum_{j=0}^{n} c_{ij}\,B_i^m(u)B_j^n(v),
\]
then
\[
\min_{i,j} c_{ij}\le H(u,v)\le \max_{i,j} c_{ij}
\qquad 0\le u,v\le 1,
\]
because all Bernstein basis functions are nonnegative and their sum is $1$.
Since $G_1$ has bidegree $(6,4)$, we use the  Bernstein basis of bidegree $(6,4)$.\\[2mm]
First we divide the square \([0, 1]\times[0, 1]\) into
\begin{align*}
Q_1&=\left[0,\frac12\right]\times\left[0,\frac12\right],\qquad
Q_2=\left[0,\frac12\right]\times\left[\frac12,1\right],\\[3mm]
Q_3&=\left[\frac12,1\right]\times\left[0,\frac12\right],\qquad
Q_4=\left[\frac12,1\right]\times\left[\frac12,1\right].
\end{align*}
%%%%%%%%%%
For each subrectangle \(Q_k=[a,b]\times[c,d]\), we first transfer the problem to the unit square by writing
\[
p=a+(b-a)u,\qquad x=c+(d-c)v,\qquad (u,v)\in[0, 1]\times[0, 1].
\]
The polynomial \(G_1(p,x)\) then becomes a polynomial in \((u,v)\), which we express in the Bernstein basis of bidegree \((6,4)\),
\[
G_1(a+(b-a)u,\;c+(d-c)v)
=
\sum_{i=0}^{6}\sum_{j=0}^{4} b_{ij}\,B_i^6(u)B_j^4(v).
\]
Because
\[
B_i^6(u)\ge 0,\qquad B_j^4(v)\ge 0,
\qquad
\sum_{i=0}^{6}\sum_{j=0}^{4} B_i^6(u)B_j^4(v)=1,
\]
it follows that the value of the polynomial on the rectangle is bounded above by the largest Bernstein coefficient \(b_{ij}\) arising from that representation. Performing this calculation on each of the four rectangles gives
\begin{align*}
G_1(p,x) &\le 481 \qquad \text{on } Q_1,\\[2mm]
G_1(p,x) &\le 398 \qquad \text{on } Q_2,\\[2mm]
G_1(p,x) &\le \frac{75535}{256} \quad \text{on } Q_3,\\[2mm]
G_1(p,x) &\le \frac{18463}{64} \quad \text{on } Q_4.
\end{align*}
%%%%%%%%%%
Since
\begin{align*}
398<480,\qquad \frac{75535}{256}<480,\qquad \frac{18463}{64}<480,
\end{align*}
it follows that
\[
F(p,x)>0
\qquad \text{on }Q_2\cup Q_3\cup Q_4.
\]
Thus only $Q_{1}$ can contain a zero of $F$.
Now subdivide
\[
Q_1=\left[0,\frac12\right]\times\left[0,\frac12\right]
\]
into
\begin{align*}
Q_{11}&=\left[0,\frac14\right]\times\left[0,\frac14\right],\qquad
Q_{12}=\left[0,\frac14\right]\times\left[\frac14,\frac12\right],\\[3mm]
Q_{13}&=\left[\frac14,\frac12\right]\times\left[0,\frac14\right],\qquad
Q_{14}=\left[\frac14,\frac12\right]\times\left[\frac14,\frac12\right].
\end{align*}
Applying the Bernstein method once more on these four rectangles, we obtain
\begin{align*}
G_1(p,x) &\le \frac{1921}{4} \qquad \text{on } Q_{11},\\[2mm]
G_1(p,x) &\le \frac{14681}{32} \qquad \text{on } Q_{12},\\[2mm]
G_1(p,x) &\le \frac{13939571}{32768} \quad \text{on } Q_{13},\\[2mm]
G_1(p,x) &\le \frac{13594541}{32768} \quad \text{on } Q_{14}.
\end{align*}
Since
\begin{align*}
\frac{14681}{32}<480,\qquad
\frac{13939571}{32768}<480,\qquad
\frac{13594541}{32768}<480,
\end{align*}
we get
\[
F(p,x)>0
\qquad \text{on }Q_{12}\cup Q_{13}\cup Q_{14}.
\]
Therefore it remains only to show that
\[
F(p,x)\ge 0
\qquad \text{on }Q_{11}=\left[0,\frac14\right]\times\left[0,\frac14\right].
\]
We now consider the rectangle $Q_{11}$.
First expand $F(p_1, x)$ by 
\[
\begin{aligned}
F(p,x)=\;&960p^2-96px+384x^2 -96p^3-480p^4+96p^5-7p^6 \\
&+\bigl(-144p^2-96p^3+102p^4+192p^5+42p^6\bigr)x \\
&+\bigl(-192p-972p^2+480p^3+696p^4-288p^5-108p^6\bigr)x^2 \\
&+\bigl(-112+96p-64p^2+96p^3+112p^4-192p^5+64p^6\bigr)x^3 \\
&+\bigl(96+192p-288p^2-384p^3+288p^4+192p^5-96p^6\bigr)x^4.
\end{aligned}
\]
Let, 
\[
F(p,x)=Q(p,x)+R(p,x),
\]
where,
\[
Q(p,x)=960p^2-96px+384x^2
\]
and
\[
\begin{aligned}
R(p,x)=\;&-96p^3-480p^4+96p^5-7p^6 \\
&+\bigl(-144p^2-96p^3+102p^4+192p^5+42p^6\bigr)x \\
&+\bigl(-192p-972p^2+480p^3+696p^4-288p^5-108p^6\bigr)x^2 \\
&+\bigl(-112+96p-64p^2+96p^3+112p^4-192p^5+64p^6\bigr)x^3 \\
&+\bigl(96+192p-288p^2-384p^3+288p^4+192p^5-96p^6\bigr)x^4.
\end{aligned}
\]
Every monomial in $R(p, x)$ has total degree at least $3$.
We first estimate $Q(p, x)$ from below. Since
$
2px\le p^2+x^2,
$
we have
$
96px\le 48p^2+48x^2.
$
Therefore
\[
Q(p,x)\ge (960-48)p^2+(384-48)x^2
=912p^2+336x^2
\ge 336(p^2+x^2).
\]
Next we estimate $R(p, x)$ on $Q_{11}$, where
\[
0\le p\le \frac14,\qquad 0\le x\le \frac14.
\]
Let
\begin{align*}
R(p,x)=\sum_{i+j\ge 3} c_{ij}p^i x^j.
\end{align*}
For every monomial $p^i x^j$ with $i+j\ge 3$, we claim that
\begin{align*}
p^i x^j \le 4^{-(i+j-2)}(p^2+x^2).
\end{align*}
Indeed, if \(i\ge 2\), then
\begin{align*}
p^i x^j=p^2 p^{\,i-2}x^j\le 4^{-(i+j-2)}p^2
\le 4^{-(i+j-2)}(p^2+x^2).
\end{align*}
If $i=1$, then $j\ge 2$, and
\begin{align*}
px^j=x^2(px^{j-2})\le 4^{-(j-1)}x^2
=4^{-(i+j-2)}x^2
\le 4^{-(i+j-2)}(p^2+x^2).
\end{align*}
If \(i=0\), then \(j\ge 3\), and
\[
x^j\le 4^{-(j-2)}x^2
=4^{-(i+j-2)}x^2
\le 4^{-(i+j-2)}(p^2+x^2).
\]
Therefore, we have 
\begin{align*}
|R(p,x)|
\le
\left(\sum_{i+j\ge 3}|c_{ij}|\,4^{-(i+j-2)}\right)(p^2+x^2).
\end{align*}
Now group the coefficients according to the total degree \(i+j\).
The sums of the absolute values of the coefficients of total degree
\(3,4,5,6,7,8,9,10\) are respectively
\[
544,\quad 1740,\quad 934,\quad 1279,\quad 826,\quad 588,\quad 256,\quad 96.
\]
Hence
\[
\sum_{i+j\ge 3}|c_{ij}|\,4^{-(i+j-2)}
=
\frac{544}{4}
+\frac{1740}{16}
+\frac{934}{64}
+\frac{1279}{256}
+\frac{826}{1024}
+\frac{588}{4096}
+\frac{256}{16384}
+\frac{96}{65536}.
\]
A direct simplification gives
\[
\sum_{i+j\ge 3}|c_{ij}|\,4^{-(i+j-2)}
=
\frac{543349}{2048}.
\]
Therefore
\[
|R(p,x)|\le \frac{543349}{2048}(p^2+x^2).
\]
Combining the estimates for $Q$ and $R$, we get
\[
F(p,x)=Q(p,x)+R(p,x)
\ge 336(p^2+x^2)-\frac{543349}{2048}(p^2+x^2).
\]
Thus
\[
F(p,x)\ge
\left(336-\frac{543349}{2048}\right)(p^2+x^2)
=
\frac{144779}{2048}(p^2+x^2).
\]
Since
\[
\frac{144779}{2048}>0,
\]
it follows that
\[
F(p,x)\ge 0
\qquad \text{for all }(p,x)\in Q_{11},
\]
it is easy to see that
\begin{align*}
F(p,x)>0
\qquad \text{for }(p,x)\in Q_{11}\setminus\{(0,0)\}.
\end{align*}
Also, we have
\[
F(0,0)=480-G_1(0,0)=0.
\]
Combining this with the Bernstein bounds on the other rectangles, we conclude that
\[
F(p,x)\ge 0
\qquad \text{for all }(p,x)\in[0, 1]\times[0, 1],
\]
with equality only at \((0,0)\).
Equivalently,
\[
G_1(p_1,x)\le 480
\qquad \text{for all }(p_1,x)\in[0, 1]\times[0, 1],
\]
and equality holds only at
\[
(p_1,x)=(0,0).
\]
Therefore
\[
\max_{0\le p_1\le 1,\;0\le x\le 1}G_1(p_1,x)=480.
\]
Now we solve the case when 
$y=0$, precisely for the function $G_{2}(p_1, x):=H_{1}(p_1, x, 0)$.\\[2mm]
Setting $p_{1}=p$, and applying the Bernstein method to
\[
\begin{aligned}
G_2(p,x)=\;&7 p^6 + 42 p^4 (1-p^2)x + 12 p^2(17-26p^2+9p^4)x^2 \\
&+16\bigl(7(1-p^2)+20p^2(1-p^2)+4p^4(1-p^2)\bigr)x^3 \\
&+96p^2(-1+p^2)^2x^4 \\
&+144(1-p^2)\bigl(p^2+4(1-p^2)x\bigr)(1-x^2) \\
&+96p(1-p^2)(1-x^2)\bigl(p^2+x+2p^2x+2(1-p^2)x^2\bigr)
\end{aligned}
\]
on the rectangle
\[
[0,1]\times[0,1].
\]
Since the given rectangle is already the unit square, no affine transformation is needed. We first expand \(G_2(p,x)\) in the power basis:
\[
\begin{aligned}
G_2(p,x)=\;&\bigl(7p^6-96p^5-144p^4+96p^3+144p^2\bigr) \\
&+\bigl(-42p^6-192p^5+618p^4+96p^3-1152p^2+96p+576\bigr)x \\
&+\bigl(108p^6+288p^5-168p^4-480p^3+60p^2+192p\bigr)x^2 \\
&+\bigl(-64p^6+192p^5-832p^4-96p^3+1360p^2-96p-464\bigr)x^3 \\
&+\bigl(96p^6-192p^5-192p^4+384p^3+96p^2-192p\bigr)x^4.
\end{aligned}
\]
Now consider the Bernstein basis of bidegree \((6,4)\):
\[
B_i^6(p)=\binom{6}{i}p^i(1-p)^{6-i},
\qquad
B_j^4(x)=\binom{4}{j}x^j(1-x)^{4-j}.
\]
Then
\[
G_2(p,x)=\sum_{i=0}^{6}\sum_{j=0}^{4} c_{ij}\,B_i^6(p)B_j^4(x),
\]
where the Bernstein coefficients \(c_{ij}\) are determined by the power-basis expansion.

For the present polynomial, the Bernstein coefficient matrix \((c_{ij})\) is \\
\[
\begin{pmatrix}
0 & 144 & 288 & 316 & 112 \\[5mm]
0 & 148 & \dfrac{904}{3} & 340 & 112 \\[5mm]
\dfrac{48}{5} & \dfrac{712}{5} & \dfrac{4298}{15} & \dfrac{1022}{3} & \dfrac{2188}{15} \\[5mm]
\dfrac{168}{5} & \dfrac{666}{5} & \dfrac{1234}{5} & \dfrac{1566}{5} & \dfrac{1068}{5} \\[5mm]
\dfrac{336}{5} & \dfrac{1271}{10} & \dfrac{2917}{15} & \dfrac{7639}{30} & \dfrac{802}{3} \\[5mm]
80 & \dfrac{215}{2} & 127 & \dfrac{935}{6} & \dfrac{634}{3} \\[5mm]
7 & 7 & 7 & 7 & 7
\end{pmatrix}.
\]\\
We now use the fact that for \((p,x)\in[0, 1]\times[0, 1]\),
\[
B_i^6(p)\ge 0,\qquad B_j^4(x)\ge 0,
\]
and
\[
\sum_{i=0}^{6}\sum_{j=0}^{4} B_i^6(p)B_j^4(x)=1.
\]
Hence \(G_2(p,x)\) is a convex combination of the Bernstein coefficients. Therefore,
\[
\min_{0\le i\le 6,\;0\le j\le 4} c_{ij}
\le
G_2(p,x)
\le
\max_{0\le i\le 6,\;0\le j\le 4} c_{ij}
\]
for all $(p,x)\in[0, 1]\times[0, 1]$.
From the coefficient matrix above, we see that
\[
\max_{0\le i\le 6,\;0\le j\le 4} c_{ij}=\frac{1022}{3}.
\]
Consequently,
\[
G_2(p_1,x)\le \frac{1022}{3}
\qquad
\text{for all }0\le p_1\le 1,\ 0\le x\le 1.
\]
Thus the Bernstein method gives the upper bound
\[
\max_{0\le p_1\le 1,\;0\le x\le 1} G_2(p_1,x)\le \frac{1022}{3}\le 480.
\]
Hence, exhausting all the cases, we have
\[
69120\,|H_3(1)|\le 480.
\]
Therefore,
\begin{align*}
|H_3(1)|\le \frac{480}{69120}=\frac{1}{144}.
\end{align*}
Finally, we show that $\omega(z)=z^3$ provides an extremal function for the third Hankel determinant. In this case,
\[
1+z\frac{\widetilde{f}''(z)}{\widetilde{f}'(z)}=\varphi(z^3)
=1+z^3+\frac{z^6}{4}.
\]
Therefore,
\begin{align*}
\widetilde{f}(z)
&=\int_{0}^{z} \exp\!\left(\int_{0}^{\zeta} \frac{\varphi(t^3)-1}{t}\,dt\right)\, d\zeta,\\[2pt]
\widetilde{f}(z)
&=z + \frac{1}{12}z^4 + \frac{1}{72} z^7+\ldots .
\end{align*}
It is easy to check that this $\widetilde{f}$ gives $|H_3(1)|=\dfrac{1}{144}$, which proves the sharpness.
\end{pf}
\vspace{3mm}

\noindent\textbf{Compliance of Ethical Standards:}\\

\noindent\textbf{Conflict of interest.} The authors declare that there is no conflict  of interest regarding the publication of this paper.
\vspace{1.5mm}

\noindent\textbf{Data availability statement.}  Data sharing is not applicable to this article as no datasets were generated or analyzed during the current study.
\vspace{1.5mm}

\noindent\textbf{Authors contributions.} Both the authors have made equal contributions in reading, writing, and preparing the manuscript.\\

\noindent\textbf{Acknowledgment:} 
The second named author acknowledges financial support from the Council of Scientific and Industrial Research (CSIR), Government of India, through a CSIR Fellowship.
%%%%%%%%%%%%%%%%%

\end{document}